\newtheorem{theorem}{Theorem}
\newtheorem{lemma}{Lemma}
\newcommand{\leqnomode}{\tagsleft@true}
\newcommand{\reqnomode}{\tagsleft@false}
\def\({\begin{eqnarray}}
\def\){\end{eqnarray}}
\def\[{\begin{eqnarray*}}
\def\]{\end{eqnarray*}}
\def\part#1#2{\frac{\partial #1}{\partial #2}}
\def\R{\mathbb{R}}
\def\Z{\mathbb{Z}}
\def\N{\mathbb{N}}
\def\d{\mathrm{d}}
\def\tot#1#2{\frac{\d #1}{\d #2}}
\def\I{\mathcal{I}}
\def\K{I}
\def\wtx{\widetilde x}
\def\wx#1#2{{\wtx_{#1}}^{\; #2}}
\def\wxji{\wx{j}{i}}
\def\ta#1#2{\tau_{{#1}{#2}}}
\def\taij{\ta{i}{j}}
\def\c{{\mathfrak{c}}}
\def\s{{\mathfrak{s}}}
\def\upsi{{\underline\psi}}
\def\sprod#1#2{{\left\langle {#1}, {#2} \right\rangle}}
\begin{document}

\title{Optimal condition for asymptotic consensus in the Hegselmann-Krause model with finite speed of information propagation}

%% use optional labels to link authors explicitly to addresses:
%% \author[label1,label2]{}
%% \affiliation[label1]{organization={},
%%             addressline={},
%%             city={},
%%             postcode={},
%%             state={},
%%             country={}}
%%
%% \affiliation[label2]{organization={},
%%             addressline={},
%%             city={},
%%             postcode={},
%%             state={},
%%             country={}}

\author{Jan Haskovec\footnote{Computer, Electrical and Mathematical Sciences \& Engineering, King Abdullah University of Science and Technology, 23955-6900 Thuwal, KSA. jan.haskovec@kaust.edu.sa}
{ and}
Mauro Rodriguez Cartabia\footnote{IMAS (UBA-CONICET) and Departamento de Matem\'{a}tica Facultad de Ciencias Exactas y Naturales, Universidad de Buenos Aires, Ciudad Universitaria, 1428 Buenos Aires, Argentina. mrodriguezcartabia@dm.uba.ar}}

\maketitle

\begin{abstract}
We prove that asymptotic global consensus is always reached
in the Hegselmann-Krause model with finite speed of information propagation $\c>0$
under minimal (i.e., necessary) assumptions on the influence function.
In particular, we assume that the influence function is globally positive,
which is necessary for reaching global consensus, and such that the agents
move with speeds strictly less than $\c$, which is necessary
for well-posedness of solutions.
From this point of view, our result is optimal.
The proof is based on the fact that the state-dependent delay,
induced by the finite speed of information propagation,
is uniformly bounded.
%and confirms a hypothesis about the asymptotic behavior
%of solutions of the model formulated in a previous work.
%and represents a significant improvement of previous work.
% is always reached in the spatially one-dimensional setting of the model, as long as agents travel slower than $\c$.
\end{abstract}

\noindent\textbf{Keywords:}
Asymptotic consensus, Hegselmann-Krause model, long-time behavior, 

state-dependent delay.

\noindent\textbf{MSC:}
34K20, 34K60, 82C22.
%92D50.

%%%%%%%%%%%%%%%%%%%%%%%%%%%%%%%%%%%%%%%%
\section{Introduction}\label{sec:Intro}
We study the variant of the Hegselmann-Krause model \cite{HK}
of opinion formation with finite speed of information propagation, introduced in \cite{Has:PAMS}.
The model describes the evolution of $N\in\N$, $N\geq 2$, agents with generalized opinions (positions) represented by vectors $x_i = x_i(t)\in\R^d$, $d\in\N$,
$i\in [N]$, where here and in the sequel we denote $[N] := \{1, \cdots, N\}$.
Information between agents propagates with a fixed finite speed $\c>0$.
Consequently, agent located in $x_i=x_i(t)$ at time $t>0$ observes the
position of the agent $x_j$ at time $t-\taij$, where $\taij$ solves
\(  \label{eq:tau}
   \c \taij(t) = |x_i(t) - x_j(t-\taij(t))|,
\)
i.e., $\taij(t)$ is the time that information needs to travel from location $x_j(t-\taij(t))$
to location $x_i(t)$. %, for all $i, j\in\{1,\dots,N\}$.
Dynamics of the vectors $x_i=x_i(t)$ is governed by the system
\(  \label{eq:0}
   \dot x_i = \frac{1}{N-1} \sum_{j=1}^N \psi(|\wxji - x_i|) \left( \wxji - x_i \right), \qquad i \in [N],
\)
where we introduced the notation %$\wxji := x_j(t - \taij(t))$.
\[ \wxji := x_j(t - \taij(t)). \]
We also introduce the formal notation $\ta{i}{i}:=0$ and $\wx{i}{i}:=x_i(t)$
and, if no danger of confusion, we drop the explicit time dependence,
writing just $x_i$ for $x_i(t)$ and similarly for other quantities.
The strictly positive \emph{influence function} $\psi = \psi(r)$ in \eqref{eq:0}
%measures how strongly each agent is influenced by others depending on the distance of their opinions.
is assumed uniformly Lipschitz continuous on $[0,\infty)$.
%In the sequel, we shall assume that $\psi(r)>0$ for all $r>0$ and,
Without loss of generality (by an eventual rescaling of time), we set $\psi(0)=1$,
and we assume $\psi$ to be nonincreasing on $[0,\infty)$;
alternatively, one can replace $\psi$ with its nonincreasing rearrangement
$\Psi(u) := \min_{r\in[0,u]} \psi(r)$ %for $u\geq 0$,
in the below proofs, preserving their validity.
Moreover, denoting
\(   \label{cond:s}
    \s := \sup_{r>0} \psi(r)r,
\)
we impose that $\s < \c$.

With the above set of assumptions, well-posedness of solutions of the system \eqref{eq:tau}--\eqref{eq:0}
was established in \cite[Theorem 2.1]{Has:PAMS}, subject to the initial condition
\(  \label{IC:0}
    x_i(t) = x_i^0(t)\qquad\mbox{for } i \in [N],\quad t\leq 0,
\)
where $x_i^0=x_i^0(t)$ are uniformly Lipschitz continuous paths on $(-\infty,0]$,
with Lipschitz constant strictly less than $\c$.
Then, due to \eqref{cond:s}, the (unique) solution trajectories $x_i=x_i(t)$
are uniformly Lipschitz continuous on $[0,\infty)$, with Lipschitz constant $\s$.

In \cite[Section 2]{Has:PAMS}, it was explained that the above set of assumptions,
especially \eqref{cond:s} with $\s<\c$, is necessary for the system \eqref{eq:tau}--\eqref{eq:0}
to be solvable.
Let us point out that, once the initial datum $x_i^0=x_i^0(t)$ is fixed,
the boundedness of $\psi(r)r$ in \eqref{cond:s} is in fact only required
on bounded $r$-intervals of length determined by the radius of the support
of the initial datum. This is a direct consequence of the nonexpansivity of the system \eqref{eq:tau}--\eqref{IC:0},
i.e., the fact that all particle trajectories are uniformly contained within a
compact set defined by the support of the initial datum.
Morever, the nonexpansivity also implies a uniform bound on the delays (see Lemma \ref{lem:tauij} below),
so that the initial datum \eqref{IC:0} only needs to be prescribed
on a compact time interval.

%%%%%%%%%%%%%%%%%%%%%%%%%%%%%%%%%%%%%%%%
\section{Asymptotic consensus}\label{sec:Cons}
Asymptotic (global) consensus in the context of \eqref{eq:0} is defined as the property
\(  \label{def:cons}
   \lim_{t\to\infty} d_x(t) = 0,
\)
where the group diameter $d_x=d_x(t)$ is given by
%\(   \label{def:diam}
   $d_x(t) := \max_{i,j\in[N]} |x_i(t) - x_j(t)|$.
%\)
>From the extensive literature on the consensus behavior
of multi-agent systems with delay, we point out \cite{Lu},
where consensus in directed static networks (with both linear and nonlinear coupling)
is proved with arbitrary finite communication delays.
Here, the delays $\tau_{ij}$ are fixed (i.e., independent of time),
which facilitates the construction of a Lyapunov functional.
Unfortunately, this approach fails when $\tau_{ij}$ are functions of time,
as is the case with \eqref{eq:tau}.
An overview of recent results on the consensus behavior of delay \emph{linear}
multi-agent systems can be found in \cite{LiuLiu-book}.
The recent papers \cite{Choi} and, resp., \cite{Paolucci} establish asymptotic consensus in the nonlinear
Hegselmann-Krause model with (a-priori prescribed) variable and, resp., distributed time delay.
However, both papers impose a smallness assumption on the delay,
which is essential for the proof. Consequently, their approach cannot be applied
for our system \eqref{eq:tau}--\eqref{eq:0}.

To our best knowledge, the Hegselmann-Krause model with finite speed of information propagation,
i.e., state-dependent delay,
was only studied in \cite{Has:PAMS} so far.
There, asymptotic consensus was proved for solutions of \eqref{eq:tau}--\eqref{IC:0}
in two situations: in the spatially one-dimensional setting $d=1$, or
in any spatial dimension, but under an additional assumption imposing smallness
of $\s$ compared to the propagation speed $\c$. %, where also the decay properties of $\psi$ entered the condition.
It was hypothesized in \cite{Has:PAMS} that all solutions of \eqref{eq:tau}--\eqref{IC:0}
reach asymptotic consensus, in any spatial dimension $d\geq 1$,
with only the minimal assumptions, i.e., as soon as $\psi$ is globally positive and \eqref{cond:s}
holds with $\s < \c$. The goal of this paper is to demonstrate that this hypothesis indeed holds true.

\begin{theorem}\label{thm:main}
Let the assumptions set in Section \ref{sec:Intro} be verified.
Then all solutions of \eqref{eq:tau}--\eqref{IC:0} reach global asymptotic consensus
in the sense of \eqref{def:cons}.
\end{theorem}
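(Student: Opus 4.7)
The plan is to combine three structural ingredients: the uniform boundedness of the state-dependent delays $\taij(t)\le\bar\tau$ guaranteed by Lemma~\ref{lem:tauij}; the nonexpansivity of the system \eqref{eq:tau}--\eqref{eq:0}, which confines all trajectories to a compact convex set $K\subset\R^d$; and the resulting uniform positive lower bound $\psi_\ast := \min_{r\in[0,\mathrm{diam}(K)]}\psi(r)>0$ on the interaction weights. The first move is to rewrite \eqref{eq:0} in ``convex-combination'' form $\dot x_i = \alpha_i(t)\bigl(\hat x_i(t)-x_i(t)\bigr)$, with $\alpha_i(t)\ge\psi_\ast$ and $\hat x_i(t)$ a $\psi$-weighted convex combination of the delayed positions $\{\wxji\}_{j\in[N]}$. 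This makes it explicit that each agent is pulled, at rate bounded below by $\psi_\ast$, toward a point in the convex hull of the other agents' positions taken over a backward time window of length at most $\bar\tau$.

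I would then introduce the memory diameter
\[
 D(t) := \mathrm{diam}\bigl\{x_i(s) : i\in[N],\; s\in[t-\bar\tau,t]\bigr\},
\]
which dominates the instantaneous diameter $d_x(t)$. Using the convex-combination form together with a continuity/invariance argument, one shows that for any large $t$ the whole future trajectory $\{x_i(r):i\in[N],\,r\ge t\}$ lies in the convex hull of $\{x_k(s):k\in[N],\,s\in[t-\bar\tau,t]\}$; consequently $D$ is nonincreasing and therefore $D(t)\downarrow D^*\ge 0$ as $t\to\infty$.

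The key and hardest step is to upgrade this monotonicity to a quantitative contraction: exhibit $T>0$ and $\lambda\in(0,1)$ with $D(t+T)\le\lambda D(t)$ for all large $t$, whence $D(t)\to 0$ exponentially and \eqref{def:cons} follows. My approach would be a hyperplane-projection argument: for each unit vector $e\in\S^{d-1}$, introduce $V_e(t):=\sup_{s\in[t-\bar\tau,t]}\max_i\sprod{x_i(s)}{e}$ and its counterpart $v_e(t)$ with $\max/\sup$ replaced by $\min/\inf$, so that $D(t)=\sup_{e\in\S^{d-1}}\bigl(V_e(t)-v_e(t)\bigr)$. Since $\hat x_i$ places weight at least $\psi_\ast/(N-1)$ on every other agent, the extremal agent in direction $e$ is pulled strictly away from the supporting hyperplane $\{y:\sprod{y}{e}=V_e(t)\}$ at a rate proportional to $V_e(t)-v_e(t)$; integrating this over a window of length $T$ and taking the supremum over $e$ yields the quantitative contraction of $D$.

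The main obstacle I anticipate is that the different delays $\taij(t)$ carried by different pairs force $\hat x_i(t)$ to aggregate information from a whole backward $\bar\tau$-window rather than from the current time; this is precisely what prevents a direct application of the Lyapunov-functional approach of \cite{Lu} available for fixed delays. The uniform Lipschitz bound $\s<\c$ permits replacing $\wxji$ by $x_j(t)$ up to an error of order $\s\bar\tau$, but this error must be absorbed into the contraction, which requires a careful balance: $T$ must be large enough for the drift of the extremal projection to produce a definite decrease of $V_e-v_e$, yet controlled so that the delay-induced errors are dominated by this decrease. The fact that $\bar\tau$ is uniformly bounded (Lemma~\ref{lem:tauij}) is what makes this balance achievable and is the decisive new ingredient beyond the earlier partial result in \cite{Has:PAMS}.
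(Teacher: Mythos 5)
Your skeleton --- the uniform delay bound from Lemma \ref{lem:tauij}, a backward-window diameter ($D(t)$ is the continuous-time analogue of the paper's $\K_n$ from Section \ref{sec:Prel}), its monotonicity via convex-hull invariance (Lemma \ref{lem:aux2}), directional projections, and a geometric contraction driven by a uniform lower bound on the weights --- coincides with the paper's strategy. But the one step that carries all the difficulty is exactly the step you leave unproved, and neither of the two mechanisms you sketch for it works. First, the claim that the extremal agent in direction $e$ is pulled away from the supporting hyperplane ``at a rate proportional to $V_e(t)-v_e(t)$'' is false pointwise in time: the velocity of agent $i$ is a weighted average of the vectors $\wxji-x_i$, and every delayed position $\wxji$ may project onto $e$ arbitrarily close to $V_e(t)$ even when agent $j$'s \emph{current} projection is near $v_e(t)$, because $V_e$ and $v_e$ are extrema over a whole window and over all agents. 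The lower bound $\psi_\ast/(N-1)$ only guarantees a pull toward some point of the window hull, not toward its bottom face in direction $e$, so integrating over a window of length $T$ does not by itself produce a decrease proportional to $V_e-v_e$; an extra argument is needed for the case in which all delayed positions seen by the extremal agent sit near the top.

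Second, your fallback --- replacing $\wxji$ by $x_j(t)$ ``up to an error of order $\s\bar\tau$'' and absorbing it --- reintroduces precisely the smallness condition the theorem is meant to eliminate. By the $\s$-Lipschitz bound and Lemma \ref{lem:tauij}, the replacement error is $|\wxji-x_j(t)|\le\s\,\taij\le\frac{\s}{\c-\s}\,d_x(t)$: it is proportional to the diameter itself, with a constant that blows up as $\s\uparrow\c$, so it can only be dominated by the contraction gain under a smallness assumption on $\s$ relative to $\c$ (or on the delay), i.e., the conditional results of \cite{Has:PAMS,Choi,Paolucci}. The paper closes this gap differently (following \cite{Cartabia}): fix the pair $(I,J)$ attaining $d_x(n\tau)$ and the corresponding unit vector $y$, and distinguish two cases on the window $[(n-2)\tau,n\tau]$. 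If $\sprod{x_I(T)-x_J(S)}{y}<0$ for some $T,S$ there, the Gr\"onwall estimate of Lemma \ref{lem:aux5} already gives $d_x(n\tau)\le(1-e^{-2\tau})\K_{n-2}$; if instead all these cross projections are nonnegative, that sign information makes the $\upsi$-weighted cross terms in $\tot{}{t}\Delta_{IJ}^y$ nonpositive and yields $d_x(n\tau)\le\left(1-(1-e^{-\tau})\upsi\right)\K_{n-1}$. Combined with Lemma \ref{lem:aux6}, this gives $\K_{n+1}\le(1-e^{-\tau}\alpha)\K_{n-2}$ with no restriction on $\tau$. Your outline has no analogue of this dichotomy, and without it the quantitative contraction --- the heart of the theorem --- is not established.
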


Let us point out that all the assumptions given in Section \ref{sec:Intro} are necessary.
Indeed, the assumption $\s < \c$ in \eqref{cond:s} cannot be relaxed,
since otherwise the well-posedness of solutions of the system \eqref{eq:tau}--\eqref{IC:0}
could not be guaranteed. The same holds for Lipschitz continuity of $\psi$.
Moreover, the global positivity of $\psi$
is necessary for reaching (unconditional) global consensus
(i.e., to prevent formation of several opinion clusters).
>From this point of view, the statement of Theorem \ref{thm:main} is optimal.

%%%%%%%%%%%%%%%%%%%%%%%%%%%%%%%%%%%%%%%%
\section{Notation and preliminaries}\label{sec:Prel}

We introduce the radius of the agent group, 
$R_x(t) := \max_{i\in[N]} |x_i(t)|$.
Lemma 5.1 of \cite{Has:PAMS} states that $R_x=R_x(t)$ is bounded uniformly in time by the radius of the initial datum,
defined as
\[ % \label{def:R0}
   R_x^0 := \max_{t\in [-S^0,0]} R_x(t),\qquad \mbox{with } S^0=\frac{d_x(0)}{\c-\s}.
\]
I.e., we have $R_x(t) \leq R_x^0$ for all $t\geq 0$.
Consequently, denoting $\upsi := \frac{1}{N-1} \psi\left(2 R_x^0 \right)$
and $\psi_{ij} := \psi\left( \left| \wxji - x_i \right| \right)$, we have
for all $i, j \in [N]$ and $t\geq 0$,
\(   \label{upsi}
   \frac{\psi_{ij}(t)}{N-1} = \frac{1}{N-1} \psi\left( |x_j(t-\tau_{ij}(t)) - x_i(t)| \right)
      \geq \frac{\psi\left(2 R_x^0 \right)}{N-1} 
      = \upsi,
\)
due to the triangle inequality $|x_j(t-\tau_{ij}(t)) - x_i(t)| \leq |x_j(t-\tau_{ij}(t))| + |x_i(t)| \leq 2 R_x^0$
and the monotonicity of $\psi$.

\begin{lemma}\label{lem:tauij}
Along the solutions of \eqref{eq:tau}--\eqref{eq:0} we have
\(  \label{est:tauij}
   \tau_{ij}(t) \leq \frac{d_x(t)}{\c-\s} \qquad \mbox{for all } i, j \in [N] \mbox{ and } t\geq 0.
\)
%where $\tau_{ij}=\tau_{ij}(t)$ is the unique solution of \eqref{eq:tau} and $d_x=d_x(t)$ defined in \eqref{dXdV}.
\end{lemma}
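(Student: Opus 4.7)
The plan is to apply the triangle inequality directly to the defining relation \eqref{eq:tau}, inserting $x_j(t)$ as an intermediate point so as to split the right-hand side into the current spatial gap between agents $i$ and $j$ (bounded by the group diameter) plus the temporal displacement of agent $j$ over a time interval of length $\tau_{ij}(t)$ (bounded via Lipschitz regularity of the trajectories).

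Concretely, from \eqref{eq:tau} one has
\[
\c\,\tau_{ij}(t) \;=\; |x_i(t) - x_j(t - \tau_{ij}(t))| \;\leq\; |x_i(t) - x_j(t)| \,+\, |x_j(t) - x_j(t - \tau_{ij}(t))|.
\]
The first summand on the right is at most $d_x(t)$ by definition of the diameter. The second summand is the displacement of $x_j$ over a time window of length $\tau_{ij}(t)$; using that the solution trajectories are uniformly Lipschitz in time with constant $\s$ on $[0,\infty)$ (and, by the standing assumption, Lipschitz with constant strictly less than $\c$ on the initial-data segment $(-\infty,0]$), this displacement is controlled by $\s\,\tau_{ij}(t)$. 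Substituting back gives $\c\,\tau_{ij}(t) \leq d_x(t) + \s\,\tau_{ij}(t)$, and dividing by the strictly positive quantity $\c - \s$ produced by \eqref{cond:s} yields \eqref{est:tauij}.

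The only mildly delicate point is the Lipschitz control of $|x_j(t) - x_j(t - \tau_{ij}(t))|$ when $t - \tau_{ij}(t) < 0$, since there the path is given by the initial datum rather than by the solution itself; this is handled by splitting the interval $[t-\tau_{ij}(t),t]$ at $0$ and invoking the two separate Lipschitz bounds, both compatible with a common constant under the assumptions recalled in Section~\ref{sec:Intro}. Beyond this bookkeeping, no ingredient other than the triangle inequality and the bound $\s < \c$ is required, so I expect the proof to be short and essentially mechanical.
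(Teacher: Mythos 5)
Your argument is exactly the paper's proof: insert $x_j(t)$ via the triangle inequality into \eqref{eq:tau}, bound the two pieces by $d_x(t)$ and by $\s\,\tau_{ij}(t)$ using the Lipschitz bound on the trajectories, and divide by $\c-\s>0$. Your extra remark about splitting the interval at $t=0$ when $t-\tau_{ij}(t)<0$ is careful bookkeeping the paper silently omits, but it does not change the argument.
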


\begin{proof}
By \eqref{eq:tau} we have
$\c \tau_{ij} =   |\wxji - x_i|$.
On the other hand, due to the $\s$-Lipschitz continuity of $x_j$,
\[  % \label{est:wxjixj}
   |\wxji - x_j|  = |x_j(t-\tau_{ij}) - x_j(t)| \leq \s \tau_{ij}.
\]
Therefore, by the triangle inequality,
\[
   \c \tau_{ij} = |\wxji - x_i| \leq |x_i-x_j| + |\wxji-x_j| \leq d_x(t) + \s\tau_{ij}, %= \dx + \frac{\s}{\c} |\wxji - x_i|,
\]
and \eqref{est:tauij} follows.
\end{proof}

Noting that $d_x(t) \leq 2 R_x(t) \leq 2 R_x^0$ for all $t \geq 0$,
Lemma \ref{lem:tauij} gives
\(  \label{bdd:tauij}
   \tau_{ij}(t) \leq \tau \qquad \mbox{for all } i, j \in [N] \mbox{ and } t\geq 0,
\)
with $\tau:= \frac{2R_x^0}{\c-\s}$.
Then, for $n\in\Z$ we denote the closed time interval $\I_n:= [(n-1)\tau, n\tau]$ and
\(  \label{Kn}
   \K_n := \max_{s,t\in \I_n} \max_{i,j\in[N]} \left| x_i(s) - x_j(t) \right|.
\)
Moreover, for any vector $y\in\R^d$ and $n\in\N$ we denote
\(  \label{def:Mm}
   m_n^y := \min_{s\in\I_n}  \min_{j\in[N]} \sprod{x_j(s)}{y}, \qquad
   M_n^y := \max_{s\in\I_n} \max_{j\in[N]} \sprod{x_j(s)}{y},
\)
where $\sprod{\cdot}{\cdot}$ is the standard dot product in $\R^d$.

%%%%%%%%%%%%%%%%%%%%%%%%%%%%%%%%%%%%%%%%
\section{Proof of Theorem \ref{thm:main}}\label{sec:Proof}

In light of the uniform bound \eqref{bdd:tauij}, we only need to prove that any solution
$x_i=x_i(t)$, $i\in[N]$, of \eqref{eq:0} converges to global asymptotic consensus in the sense of \eqref{def:cons},
as long as the delays $\tau_{ij} = \tau_{ij}(t)$ are nonnegative, continuous, uniformly bounded functions on $[0,\infty)$.
In particular, we assume that \eqref{bdd:tauij} holds with some $\tau>0$, however, do \emph{not} impose any smallness
assumptions on $\tau$.
The proof of Theorem \ref{thm:main} relies on appropriate generalizations of the techniques developed in \cite{Cartabia}.
We shall work out full details only in significantly deviating parts. % , providing references otherwise.

%The following Lemma is a generalization of \cite[Lemma 3.3.]{Cartabia}.

\begin{lemma}  \label{lem:aux2}
For any $n\in\N$, any fixed vector $y\in\R^d$ and any $i\in[N]$ we have
\(  \label{est:Mm}
   m_n^y \leq \sprod{x_i(t)}{y} \leq  M_n^y \qquad\mbox{for all } t\geq (n-1)\tau,
\)
with $m_n^y$ and $M_n^y$ defined in \eqref{def:Mm}.
Moreover, for $\K_n$ defined in \eqref{Kn}, we have
\(  \label{Kn}
   \K_{n+1} \leq \K_n,
\)
\end{lemma}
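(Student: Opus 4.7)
The plan is to derive the pointwise estimate \eqref{est:Mm} first, and then to deduce the diameter monotonicity \eqref{Kn} as an immediate corollary via the duality $|z|=\max_{|y|=1}\sprod{z}{y}$.

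For \eqref{est:Mm}, fix $y\in\R^d$ and set $\phi_i(t):=\sprod{x_i(t)}{y}$. Taking the inner product of \eqref{eq:0} with $y$ yields the scalar linear delay equation
\begin{equation*}
   \dot\phi_i(t)+B_i(t)\phi_i(t)=\frac{1}{N-1}\sum_{j=1}^{N}\psi_{ij}(t)\sprod{\wxji(t)}{y},\qquad B_i(t):=\frac{1}{N-1}\sum_{j=1}^{N}\psi_{ij}(t).
\end{equation*}
Using the integrating factor $e^{\int_{t_0}^{t}B_i}$, this integrates to
\begin{equation*}
   \phi_i(t)=e^{-\int_{t_0}^{t}B_i}\phi_i(t_0)+\int_{t_0}^{t}\sum_{j=1}^{N}\frac{\psi_{ij}(s)}{N-1}\sprod{\wxji(s)}{y}\,e^{-\int_s^{t}B_i}\,\d s.
\end{equation*}
Since $\sum_j\psi_{ij}/(N-1)=B_i$, an elementary computation shows that $e^{-\int_{t_0}^{t}B_i}$ together with the kernel $\tfrac{\psi_{ij}(s)}{N-1}e^{-\int_s^{t}B_i}$ forms a partition of unity, so the right-hand side expresses $\phi_i(t)$ as a genuine convex combination of $\phi_i(t_0)$ and of the delayed values $\sprod{\wxji(s)}{y}$ for $s\in[t_0,t]$ and $j\in[N]$.

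By definition of $M_n^y$, the upper bound $\phi_i(t)\leq M_n^y$ holds on $\I_n$. To propagate it, set
\begin{equation*}
   t^*:=\sup\bigl\{t\geq n\tau : \phi_j(s)\leq M_n^y\text{ for all }j\in[N],\,s\in[(n-1)\tau,t]\bigr\}
\end{equation*}
and argue $t^*=+\infty$ by contradiction. If $t^*<\infty$, the excess
\begin{equation*}
   P(\delta):=\max_{i\in[N],\,s\in[t^*,t^*+\delta]}\bigl(\phi_i(s)-M_n^y\bigr)_+
\end{equation*}
is strictly positive for every $\delta>0$. Apply the integral representation with $t_0=t^*$ on $[t^*,t^*+\delta]$: the bound $\tau_{ij}\leq\tau$ combined with $t^*\geq n\tau$ forces every $s-\tau_{ij}(s)$ to lie in $[(n-1)\tau,t^*+\delta]$, hence $\sprod{\wxji(s)}{y}\leq M_n^y+P(\delta)$. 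The convex combination structure then delivers
\begin{equation*}
   \phi_i(s)-M_n^y\leq P(\delta)\bigl(1-e^{-\int_{t^*}^{s}B_i}\bigr)\leq L\,\delta\,P(\delta),
\end{equation*}
where $L$ is any uniform upper bound for $B_i$ (available since $\psi\leq\psi(0)=1$). Taking the supremum yields $P(\delta)(1-L\delta)\leq 0$, contradicting $P(\delta)>0$ as soon as $\delta<1/L$. The lower bound $\phi_i(t)\geq m_n^y$ then follows by replacing $y$ with $-y$.

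With \eqref{est:Mm} in hand, \eqref{Kn} is immediate: writing $\K_n=\max_{|y|=1}(M_n^y-m_n^y)$ and observing that $\I_{n+1}\subset[(n-1)\tau,\infty)$, \eqref{est:Mm} delivers $M_{n+1}^y\leq M_n^y$ and $m_{n+1}^y\geq m_n^y$ for every unit $y$; maximizing over $|y|=1$ gives $\K_{n+1}\leq\K_n$. The main technical obstacle is the non-strict nature of \eqref{est:Mm}: the standard ``first time of violation'' argument via $\dot\phi_i\geq 0$ stalls because $\dot\phi_i$ can vanish exactly when $\phi_i$ touches $M_n^y$. The quantitative self-improvement estimate $P(\delta)\leq L\delta\,P(\delta)$ circumvents this by turning an arbitrarily small excess into a genuine contradiction on short time scales.
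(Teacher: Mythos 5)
Your proof is correct. All the key points check out: the variation-of-constants formula with integrating factor $e^{\int B_i}$ does express $\phi_i(t)$ as a convex combination of $\phi_i(t_0)$ and the delayed values (the weights $e^{-\int_{t_0}^{t}B_i}$ and $\tfrac{\psi_{ij}(s)}{N-1}e^{-\int_s^{t}B_i}$ are nonnegative and integrate to one because $\sum_j\psi_{ij}/(N-1)=B_i$); the inclusion $s-\tau_{ij}(s)\in[(n-1)\tau,t^*+\delta]$ for $s\geq t^*\geq n\tau$ is exactly where the uniform delay bound \eqref{bdd:tauij} enters; the self-improvement estimate $P(\delta)\leq L\delta P(\delta)$ with $L$ a bound on $B_i$ (valid since $\psi\leq\psi(0)=1$) gives the contradiction for $\delta<1/L$; and the deduction of \eqref{Kn} from \eqref{est:Mm} via $\K_n=\max_{|y|=1}(M_n^y-m_n^y)$ is sound. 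Note that the paper itself gives no details here, deferring to Lemma 3.3 of the cited Cucker--Smale paper; the argument there, and the one implicitly used later in Lemma \ref{lem:aux5}, is the standard one: from $\sprod{x_j(s-\tau_{ij}(s))-x_i(s)}{y}\leq M_n^y-\sprod{x_i(s)}{y}$ one gets the differential inequality $\tot{}{t}\sprod{x_i}{y}\leq M_n^y-\sprod{x_i}{y}$ and concludes by Gr\"onwall, with the non-strict touching issue usually handled by an $\eps$-enlarged barrier $M_n^y+\eps$ and a first-crossing argument. Your quantitative excess bound on short time intervals is a clean alternative to that $\eps$-barrier device and achieves the same continuation; the Gr\"onwall route is marginally shorter and yields the quantitative decay rates used downstream, but for this lemma only the invariance statement is needed, so your version is fully adequate.
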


\begin{proof}
A slight generalization of the proof of \cite[Lemma 3.3.]{Cartabia}.
\end{proof}

For any vector $y\in\R^d$ we introduce the notation
\(   \label{def:Delta}
   \Delta_{ij}^y(t) := \sprod{x_i(t) - x_j(t)}{y}.
\)

The following Lemma is a generalization of \cite[Lemma 3.4, first part]{Cartabia}.

\begin{lemma} \label{lem:aux5}
For any unit vector $y\in\R^d$ and $i,j\in[N]$ we have
\(   \label{lem:aux5:claim}
   \Delta_{ij}^y((n+2)\tau) \leq e^{-2\tau}\sprod{x_i(t)-x_j(s)}{y} + \left(1 - e^{-2\tau}\right) \K_n
\)
for all $s,t\in [n\tau, (n+2)\tau]$.
\end{lemma}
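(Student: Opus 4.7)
The plan is to derive an integral representation for $\sprod{x_i(t)}{y}$ via variation of constants, recognize it as a convex combination of the initial value $\sprod{x_i(t_0)}{y}$ and delayed values $\sprod{x_k(r-\tau_{ik}(r))}{y}$, and then use Lemma~\ref{lem:aux2} to uniformly dominate the delayed values by $M_n^y$ (and symmetrically minorize by $m_n^y$).

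Concretely, fix a unit vector $y\in\R^d$ and write $u_i(t):=\sprod{x_i(t)}{y}$. Taking the dot product of \eqref{eq:0} with $y$ and isolating the diagonal gives
\[
   \dot u_i(t) + \alpha_i(t)\, u_i(t) = \sum_{k\neq i} \frac{\psi_{ik}(t)}{N-1}\, u_k(t - \tau_{ik}(t)),
   \qquad
   \alpha_i(t) := \sum_{k\neq i}\frac{\psi_{ik}(t)}{N-1}.
\]
The normalization $\psi\leq\psi(0)=1$ immediately yields $\alpha_i(t)\leq 1$. I would then apply the integrating factor $e^{A_i(t)}$ with $A_i(t):=\int_0^t \alpha_i(r)\,\d r$ and integrate from an arbitrary $t_0\in[n\tau,(n+2)\tau]$ up to $(n+2)\tau$, obtaining
\[
   u_i((n+2)\tau) = \lambda_i\, u_i(t_0) + \int_{t_0}^{(n+2)\tau} e^{-(A_i((n+2)\tau)-A_i(r))} \alpha_i(r) \sum_{k\neq i} \frac{\psi_{ik}(r)/(N-1)}{\alpha_i(r)}\, u_k(r-\tau_{ik}(r))\,\d r,
\]
with $\lambda_i:=e^{-(A_i((n+2)\tau)-A_i(t_0))}\in[e^{-2\tau},1]$ since $\alpha_i\leq1$ and $(n+2)\tau-t_0\leq 2\tau$. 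The coefficients in front of the delayed values are nonnegative and, together with $\lambda_i$, sum to $1$, giving a genuine convex combination.

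The crucial step is the following \emph{uniform} bound on the delayed arguments: for any $r\in[n\tau,(n+2)\tau]$ and any $k$, the estimate \eqref{bdd:tauij} yields $r-\tau_{ik}(r)\geq n\tau-\tau=(n-1)\tau$, so Lemma~\ref{lem:aux2} gives $u_k(r-\tau_{ik}(r))\leq M_n^y$. Also $u_i(t_0)\leq M_n^y$. Substituting these bounds into the convex combination above produces
\[
   u_i((n+2)\tau) \leq \lambda_i\, u_i(t_0) + (1-\lambda_i)\, M_n^y
   \leq e^{-2\tau}\, u_i(t_0) + (1-e^{-2\tau})\, M_n^y,
\]
where the second inequality uses $\lambda_i\geq e^{-2\tau}$ and $u_i(t_0)-M_n^y\leq 0$. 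Repeating the argument with the opposite inequalities (i.e., minorizing the delayed values by $m_n^y$) yields, for any $s_0\in[n\tau,(n+2)\tau]$,
\[
   u_j((n+2)\tau) \geq e^{-2\tau}\, u_j(s_0) + (1-e^{-2\tau})\, m_n^y.
\]
Subtracting these with $t_0=t$ and $s_0=s$ and using $M_n^y-m_n^y\leq \K_n$ (since $y$ is a unit vector) produces exactly \eqref{lem:aux5:claim}.

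The main technical obstacle is simply verifying that all delayed arguments $r-\tau_{ik}(r)$ land in $[(n-1)\tau,(n+2)\tau]$ so that Lemma~\ref{lem:aux2} applies uniformly — this is where the bound \eqref{bdd:tauij} is used — together with the observation $\alpha_i(t)\le 1$, which is what makes the constant $e^{-2\tau}$ (rather than something involving $\upsi$) appear in the estimate.
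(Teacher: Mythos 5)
Your proposal is correct and follows essentially the same route as the paper: both arguments rest on the uniform delay bound \eqref{bdd:tauij} together with Lemma \ref{lem:aux2} to dominate the delayed terms by $M_n^y$ (resp.\ minorize by $m_n^y$), yield the one-sided estimates of the form \eqref{lem:aux5:est_M}--\eqref{lem:aux5:est_m} with the factor $e^{-2\tau}$ extracted via the sign of $\sprod{x_i(t)}{y}-M_n^y$, and conclude by subtracting and using $M_n^y-m_n^y\leq \K_n$. The only difference is cosmetic: you obtain the contraction through an explicit variation-of-constants convex combination (where the division by $\alpha_i$ is harmless since $\psi>0$, and could be avoided altogether by bounding $f_i(r)\leq\alpha_i(r)M_n^y$ directly), whereas the paper derives the differential inequality $\tot{}{s}\sprod{x_i(s)}{y}\leq M_n^y-\sprod{x_i(s)}{y}$ using $\psi_{ij}\leq 1$ and applies Gr\"onwall's inequality.
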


\begin{proof}
Let us fix the unit vector $y\in\R^d$, then \eqref{def:Mm} gives
\(  
   M_n^y - m_n^y %&=& \max_{s,t\in\I_n} \max_{i,j\in[N]} \sprod{x_i(s)-x_j(t)}{y}   \\
      \leq \max_{s,t\in\I_n} \max_{i,j\in[N]} \left|x_i(s)-x_j(t)\right|
      = \K_n. 
      \label{MmKn}
\)
We now fix $t\geq n\tau$ and $i\in[N]$ and claim that for any $s \geq t$ we have
\(  \label{lem:aux5:est_M}
   \sprod{x_i(s)}{y} \leq e^{-(s - t)} \sprod{x_i(t)}{y} + \left(1 - e^{-(s - t)} \right) M_n^y,
\)
and
\(  \label{lem:aux5:est_m}
   \sprod{x_i(s)}{y} \geq e^{-(s - t)} \sprod{x_i(t)}{y} + \left(1 - e^{-(s - t)} \right) m_n^y.
\)
Indeed, since $s\geq n\tau$, we have $s-\tau_{ij}(t) \geq (n-1)\tau$ for any $i,j\in\N$, 
and \eqref{est:Mm} gives
\[
   \sprod{x_j(s-\tau_{ij}(t)) - x_i(s)}{y} \leq M_n^y - \sprod{x_i(s)}{y}.
\]
By one more application of  \eqref{est:Mm}, the right-hand side is nonnegative.
Then, with \eqref{eq:0}, we have
\[
   \tot{}{t} \sprod{x_i(s)}{y} &=& \frac{1}{N-1} \sum_{j\neq i} \psi_{ij}(s) \sprod{x_j(s-\tau_{ij}(s)) - x_i(s)}{y} \\
      &\leq& \frac{1}{N-1} \sum_{j\neq i} \left( M_n^y - \sprod{x_i(s)}{y} \right) 
      = M_n^y - \sprod{x_i(s)}{y},
\]
where we also used the universal bound $\psi_{ij} \leq 1$.
Here and in the sequel, the symbol $\sum_{j\neq i}$ denotes summation over all indices $j\in[N]$ such that $j\neq i$.
Estimate \eqref{lem:aux5:est_M} follows then directly by an application of the Gr\"onwall's inequality.
Estimate \eqref{lem:aux5:est_m} is obtained by the same procedure, replacing the vector $y$ with $-y$
and noting that $M_n^{-y} = - m_n^y$.

Let us now fix $i\in[N]$, $t\in [n\tau, (n+2)\tau]$ and set $s:=(n+2)\tau$ in \eqref{lem:aux5:est_M}, which gives
\[
   \sprod{x_i((n+2)\tau)}{y} &\leq& e^{-((n+2)\tau-t)} \sprod{x_i(t)}{y} + \left( 1 - e^{-((n+2)\tau-t)} \right) M_n^y \\
      &=& e^{-((n+2)\tau-t)} \left( \sprod{x_i(t)}{y} - M_n^y \right) + M_n^y \\
      &\leq& e^{-2\tau} \left( \sprod{x_i(t)}{y} - M_n^y \right) + M_n^y \\
      &=& e^{-2\tau} \sprod{x_i(t)}{y} + \left( 1 - e^{-2\tau} \right) M_n^y,
\]
where we used the fact that $\sprod{x_i(t)}{y} - M_n^y \leq 0$ in the third line.
Similarly, for any $j\in[N]$ and $s\in [n\tau, (n+2)\tau]$, \eqref{lem:aux5:est_m} gives
\[
   \sprod{x_j((n+2)\tau)}{y} \geq e^{-2\tau} \sprod{x_j(s)}{y} + \left( 1 - e^{-2\tau} \right) m_n^y.
\]
Combining the above the inequalities, we have
\[
   \Delta_{ij}^y((n+2)\tau) &=& \sprod{x_i((n+2)\tau) - x_j((n+2)\tau)}{y} \\
     &\leq& e^{-2\tau} \sprod{x_i(t) - x_j(s)}{y} + \left( 1 - e^{-2\tau} \right) \left( M_n^y-m_n^y \right)
\]
for any $s, t\in [n\tau, (n+2)\tau]$.
Using the estimate $M_n^y-m_n^y \leq \K_n$ provided by \eqref{MmKn}, we finally recover \eqref{lem:aux5:claim}.
\end{proof}

\begin{lemma} \label{lem:aux6}
For any $n\in\N$ we have
\( \label{lem:aux6:claim}
   \K_{n+1} \leq e^{-\tau} d_x(n\tau) + \left( 1- e^{-\tau} \right) \K_n.
\)
\end{lemma}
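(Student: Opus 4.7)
The plan is to reduce to an application of the one-step Grönwall estimates \eqref{lem:aux5:est_M} and \eqref{lem:aux5:est_m} from Lemma \ref{lem:aux5}, but with starting time $n\tau$ rather than iterating over the double interval $[n\tau,(n+2)\tau]$. The exponential factor $e^{-\tau}$ (instead of $e^{-2\tau}$) in the claim suggests that we should take the endpoint of propagation inside $\I_{n+1}=[n\tau,(n+1)\tau]$, so the time elapsed from the starting point $n\tau$ is at most $\tau$.

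First I would fix $s,t\in\I_{n+1}$ and $i,j\in[N]$, and choose a unit vector $y\in\R^d$ that realizes $|x_i(s)-x_j(t)|=\sprod{x_i(s)-x_j(t)}{y}$. Applying \eqref{lem:aux5:est_M} with the initial time $n\tau$ and terminal time $s\in[n\tau,(n+1)\tau]$ gives
\[
   \sprod{x_i(s)}{y} \leq e^{-(s-n\tau)}\bigl(\sprod{x_i(n\tau)}{y}-M_n^y\bigr)+M_n^y.
\]
Since $\sprod{x_i(n\tau)}{y}-M_n^y\leq 0$ by \eqref{est:Mm} and $s-n\tau\leq\tau$, replacing $e^{-(s-n\tau)}$ with $e^{-\tau}$ preserves the inequality, yielding
\[
   \sprod{x_i(s)}{y} \leq e^{-\tau}\sprod{x_i(n\tau)}{y}+(1-e^{-\tau})M_n^y.
\]
The analogous lower bound obtained from \eqref{lem:aux5:est_m} reads
\[
   \sprod{x_j(t)}{y} \geq e^{-\tau}\sprod{x_j(n\tau)}{y}+(1-e^{-\tau})m_n^y.
\]

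Subtracting the two inequalities and using $\sprod{x_i(n\tau)-x_j(n\tau)}{y}\leq|x_i(n\tau)-x_j(n\tau)|\leq d_x(n\tau)$ together with $M_n^y-m_n^y\leq \K_n$ (see \eqref{MmKn}), we obtain
\[
   |x_i(s)-x_j(t)| = \sprod{x_i(s)-x_j(t)}{y} \leq e^{-\tau}d_x(n\tau)+(1-e^{-\tau})\K_n.
\]
Taking the supremum over $s,t\in\I_{n+1}$ and $i,j\in[N]$ gives \eqref{lem:aux6:claim}.

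The only subtle point, which is really more of a bookkeeping check than a genuine obstacle, is verifying the legality of the Grönwall step: it requires $s-\tau_{ij}(s)\geq(n-1)\tau$ so that \eqref{est:Mm} can be invoked on the delayed argument, but this is automatic from $s\geq n\tau$ and the uniform bound \eqref{bdd:tauij}. Everything else is a direct transcription of the argument in Lemma \ref{lem:aux5} shortened to one interval of length $\tau$, with the key improvement being that the two bracket terms at time $n\tau$ combine into the honest diameter $d_x(n\tau)$ rather than the oscillation $\K_n$.
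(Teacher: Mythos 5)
Your proof is correct, and it is essentially the argument the paper has in mind: the paper only cites the second part of Lemma 3.4 in the reference by Rodriguez Cartabia, whose proof is exactly this one-interval Gr\"onwall step — reusing the pointwise estimates \eqref{lem:aux5:est_M}--\eqref{lem:aux5:est_m} with initial time $n\tau$, weakening $e^{-(s-n\tau)}$ to $e^{-\tau}$ via the sign of the brackets, and letting the two terms at time $n\tau$ combine into $d_x(n\tau)$ while $M_n^y-m_n^y\leq \K_n$ handles the rest. Your bookkeeping of the delayed argument ($s-\tau_{ij}(s)\geq(n-1)\tau$ via \eqref{bdd:tauij}) is also the right justification for invoking \eqref{est:Mm}.
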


\begin{proof}
A slight modification of the proof of \cite[Lemma 3.4, second part]{Cartabia}.
\end{proof}

\begin{lemma}\label{lem:aux7}
Denote $\alpha := \min\left\{e^{-2\tau},  \left( 1 - e^{-\tau}\right) \upsi \right\}$.
For all $n\geq 2$ we have
\[  % \label{K:decay}
   \K_{n+1} \leq  \left( 1 - e^{-\tau} \alpha \right) \K_{n-2}.
\]
\end{lemma}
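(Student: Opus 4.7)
The plan is to promote Lemma \ref{lem:aux6} into a genuine contraction by showing that the single-time diameter $d_x(n\tau)$ has itself already contracted by a factor $1-\alpha$ relative to $\K_{n-2}$. Combining Lemma \ref{lem:aux6} with the monotonicity $\K_n \leq \K_{n-2}$ from \eqref{Kn} (iterated twice), the algebraic identity $e^{-\tau}(1-\alpha)\K_{n-2} + (1-e^{-\tau})\K_{n-2} = (1-e^{-\tau}\alpha)\K_{n-2}$ reduces the statement to the single bound $d_x(n\tau) \leq (1-\alpha)\K_{n-2}$.

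To produce this bound, I would pick diameter-realizers $i,j \in [N]$ and set the unit vector $y := (x_i(n\tau) - x_j(n\tau))/d_x(n\tau)$, so that $d_x(n\tau) = \Delta_{ij}^y(n\tau)$. Lemma \ref{lem:aux5}, applied with $n$ replaced by $n-2$, then yields, for every $s,t \in [(n-2)\tau, n\tau]$,
\[
   d_x(n\tau) \leq e^{-2\tau}\sprod{x_i(t) - x_j(s)}{y} + (1-e^{-2\tau})\K_{n-2},
\]
after which the remainder of the argument splits into a dichotomy on whether the inner product can be driven non-positive by a choice of $s,t$. If there exist $s_0,t_0 \in [(n-2)\tau, n\tau]$ with $\sprod{x_i(t_0) - x_j(s_0)}{y} \leq 0$, the display immediately gives $d_x(n\tau) \leq (1-e^{-2\tau})\K_{n-2}$, covering the branch $\alpha = e^{-2\tau}$.

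Otherwise $\sprod{x_i(t)}{y} > \sprod{x_j(s)}{y}$ for all $s,t \in [(n-2)\tau, n\tau]$, and thus $\mu_i := \min_t \sprod{x_i(t)}{y} \geq \max_s \sprod{x_j(s)}{y} =: \mu_j$: the $y$-projections of $x_i$ and $x_j$ lie in disjoint intervals across the whole window. Expanding $\tot{}{s}\sprod{x_i(s)}{y}$ via \eqref{eq:0} on $s \in [(n-1)\tau, n\tau]$, isolating the $k=j$ summand, inserting the lower bound $\psi_{ij}/(N-1) \geq \upsi$ from \eqref{upsi}, and controlling the remaining summands via $\sprod{\wx{k}{i}(s)}{y} \leq M_{n-2}^y$ (which holds by Lemma \ref{lem:aux2} since $s - \tau_{ik}(s) \geq (n-2)\tau$), I would derive
\[
   \tot{}{s}\sprod{x_i(s)}{y} \leq \left(M_{n-2}^y - \sprod{x_i(s)}{y}\right) - \upsi\left(M_{n-2}^y - \mu_j\right).
\]
Gr\"onwall on $[(n-1)\tau, n\tau]$, together with the initial bound $\sprod{x_i((n-1)\tau)}{y} \leq M_{n-2}^y$, then yields $\sprod{x_i(n\tau)}{y} \leq M_{n-2}^y - (1-e^{-\tau})\upsi(M_{n-2}^y - \mu_j)$, and the analogous computation for $x_j$ (with the roles of min and max swapped) gives $\sprod{x_j(n\tau)}{y} \geq m_{n-2}^y + (1-e^{-\tau})\upsi(\mu_i - m_{n-2}^y)$. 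Subtracting, and using $\mu_i \geq \mu_j$ together with $M_{n-2}^y - m_{n-2}^y \leq \K_{n-2}$, one recovers $d_x(n\tau) \leq \bigl(1 - (1-e^{-\tau})\upsi\bigr)\K_{n-2}$, which covers the other branch of $\alpha$.

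The delicate point will be the sign bookkeeping in this second case: one must use $\Sigma_i(s) := \sum_{k \neq i}\psi_{ik}(s)/(N-1) \leq 1$ to absorb the ``other'' contributions $\Sigma_i(s)(M_{n-2}^y - \sprod{x_i(s)}{y})$, while simultaneously retaining the full $\upsi$-strength pull from the isolated $j$-summand. This requires regrouping the sum as $\Sigma_i(s)(M_{n-2}^y - \sprod{x_i(s)}{y}) - (\psi_{ij}(s)/(N-1))(M_{n-2}^y - \mu_j)$ before any majorisation is applied, and then invoking $\mu_j \leq M_{n-2}^y$ to ensure that the removed term carries the correct sign for Gr\"onwall.
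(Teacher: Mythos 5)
Your proposal is correct and follows the same skeleton as the paper's proof: the reduction of the claim, via Lemma \ref{lem:aux6} and the monotonicity $\K_n\leq\K_{n-2}$ from Lemma \ref{lem:aux2}, to the single bound $d_x(n\tau)\leq(1-\alpha)\K_{n-2}$; the choice of diameter-realizing indices and the unit vector $y$; the dichotomy on the sign of $\sprod{x_i(t)-x_j(s)}{y}$ over $[(n-2)\tau,n\tau]$; and the use of Lemma \ref{lem:aux5} with index $n-2$ to dispose of the first branch are exactly the paper's steps. Where you deviate is the execution of the second branch: the paper differentiates the difference $\Delta_{IJ}^y(t)$ on $\I_n$, uses the sign hypothesis to discard the delayed cross term $\sprod{x_J(t-\tau_{IJ}(t))-x_I(t-\tau_{JI}(t))}{y}\leq 0$ inside the differential inequality, and applies Gr\"onwall once against $(1-\upsi)\K_{n-1}$; you instead run Gr\"onwall separately on each projection, extracting the decrement $(1-e^{-\tau})\upsi\,(M_{n-2}^y-\mu_j)$ for $x_i$ and the increment $(1-e^{-\tau})\upsi\,(\mu_i-m_{n-2}^y)$ for $x_j$, and use the sign hypothesis only once, through $\mu_i\geq\mu_j$, at the final subtraction. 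Both variants land on $d_x(n\tau)\leq\bigl(1-(1-e^{-\tau})\upsi\bigr)\K_{n-2}$; your bookkeeping is marginally coarser (you majorize the delayed positions by $M_{n-2}^y$ where the paper can afford $M_{n-1}^y$), which is harmless since everything is finally compared with $\K_{n-2}$, and your sign checks are sound: for $s\in\I_n$ one has $s-\tau_{ik}(s)\geq(n-2)\tau$, so Lemma \ref{lem:aux2} gives $\sprod{x_k(s-\tau_{ik}(s))}{y}\leq M_{n-2}^y$ and $\mu_j\leq M_{n-2}^y$, and the normalized weights being at most $1$ while individually at least $\upsi$ justifies the regrouping you describe. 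The only (trivial) omission is the degenerate case $d_x(n\tau)=0$, where $y$ is undefined and the claimed bound is immediate; the paper dismisses it explicitly.
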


\begin{proof}
In the first step we prove that
\(  \label{lem:aux7:eq1}
   d_x(n\tau) \leq \left( 1 - \alpha \right) \K_{n-2}
\)
for all $n\geq 2$.
%Let us observe that, by definition \eqref{alpha}, $\alpha < \tau^{-1}$, so that the right-hand side of the above inequality is nonnegative.
Obviously, if $d_x(n\tau)=0$, there is nothing to prove.
Otherwise, let us fix $I, J \in[N]$ such that
$d_x(n\tau) = \left| x_I(n\tau) - x_J(n\tau) \right|$
and define
\[
   y := \frac{x_I(n\tau) - x_J(n\tau)}{\left| x_I(n\tau) - x_J(n\tau) \right|}.
\]
Then we have $d_x(n\tau) = \Delta_{IJ}^y(n\tau)$, with $\Delta_{IJ}^y(n\tau)$ defined in \eqref{def:Delta}.
Let us now introduce the following alternative: either there exist $T$, $S\in [(n-2)\tau,n\tau]$ such that
\(  \label{lem:aux7:alt1}
   \sprod{x_I(T)-x_J(S)}{y} < 0,
\)
or we have
\(  \label{lem:aux7:alt2}
   \sprod{x_I(t)-x_J(s)}{y} \geq 0
\)
for all $t$, $s\in [(n-2)\tau,n\tau]$.

If \eqref{lem:aux7:alt1} holds, then claim \eqref{lem:aux5:claim} of Lemma \ref{lem:aux5} gives
\[
   \Delta_{IJ}^y(n\tau) &\leq& e^{-2\tau}\sprod{x_I(T)-x_J(S)}{y} + \left(1 - e^{-2\tau}\right) \K_{n-2}  \\
      &<& \left( 1 - e^{-2\tau} \right) \K_{n-2} \\
      &\leq& \left( 1 - \alpha \right) \K_{n-2},
\]
where the last inequality follows from $\alpha \leq e^{-2\tau}$.
Since $d_x(n\tau) = \Delta_{IJ}^y(n\tau)$, \eqref{lem:aux7:eq1} follows.

On the other hand, let us now assume that \eqref{lem:aux7:alt2} holds for all $t$, $s\in [(n-2)\tau,n\tau]$.
%we have, as in \eqref{MmKn}, $M_{n-1}^y-m_{n-1}^y\leq \K_{n-1}$.
With \eqref{eq:0} we have
\(   \label{lem:aux7:eq2}
   \tot{}{t} \Delta_{IJ}^y(t) &=& \frac{1}{N-1} \sum_{k\neq I} \psi_{Ik}(t) \sprod{x_k(t-\tau_{Ik}(t)) - x_I(t)}{y}  \\
      && \qquad + \frac{1}{N-1} \sum_{k\neq J} \psi_{Jk}(t) \sprod{x_J(t) - x_k(t-\tau_{Jk}(t))}{y}.
      \nonumber
\)
Denoting $(A)$ the first term of the right-hand side above, we calculate
\[
   (A) &=& \frac{1}{N-1} \sum_{k\neq I}\psi_{Ik}(t) \left( \sprod{x_k(t-\tau_{Ik}(t))}{y} - M_{n-1}^y \right) \\
     && \qquad + \frac{1}{N-1} \sum_{k\neq I} \psi_{Ik}(t) \left( M_{n-1}^y - \sprod{x_I(t)}{y} \right) \\
      &\leq& {\upsi}\, \sum_{k\neq I} \left( \sprod{x_k(t-\tau_{Ik}(t))}{y} - M_{n-1}^y \right)  +  M_{n-1}^y - \sprod{x_I(t)}{y},
\]
where we used the fact, provided by Lemma \ref{lem:aux2}, that $\sprod{x_k(t-\tau_{ik}(t))}{y} - M_{n-1}^y \leq 0$ for all $k\in[N]$ and $t\in\I_n$,
combined with \eqref{upsi}, and the universal bound $\psi\leq 1$ combined with $M_{n-1}^y - \sprod{x_i(t)}{y} \geq 0$ for $t\in\I_n$.
%again due to Lemma \ref{lem:aux2}.
Moreover, since $I\neq J$ and, again, due to $\sprod{x_k(t-\tau_{Ik}(t))}{y} - M_{n-1}^y \leq 0$, we have
\[
   \sum_{k\neq I} \left( \sprod{x_k(t-\tau_{Ik}(t))}{y} - M_{n-1}^y \right) \leq  \sprod{x_J(t-\tau_{IJ}(t))}{y} - M_{n-1}^y,
\]
so that we finally arrive at
\[
   (A) \leq  %{\upsi} \left( \sprod{x_J(t-\tau_{IJ}(t))}{y} - M_{n-1}^y \right) +  M_{n-1}^y - \sprod{x_I(t)}{y} .
      \left( 1 - \upsi \right) M_{n-1}^y + \upsi\, \sprod{x_J(t-\tau_{IJ}(t))}{y} - \sprod{x_I(t)}{y}.
\]
Similarly, denoting $(B)$ the second term of the right-hand side of \eqref{lem:aux7:eq2}, we have
\[
   (B) \leq %{\upsi} \left( m_{n-1}^y - \sprod{x_I(t-\tau_{JI}(t))}{y} \right) +  \sprod{x_J(t)}{y} - m_{n-1}^y.
     - \left( 1 - \upsi \right) m_{n-1}^y - \upsi\, \sprod{x_I(t-\tau_{JI}(t))}{y} + \sprod{x_J(t)}{y}.
\]
Consequently,
\[
    \tot{}{t} \Delta_{IJ}^y(t) &=& (A)+ (B) \\
       &\leq& \left( 1 - \upsi \right) \left( M_{n-1}^y - m_{n-1}^y \right) \\
       && +\; \upsi\, \bigl[ \sprod{x_J(t-\tau_{IJ}(t))}{y} - \sprod{x_I(t-\tau_{JI}(t))}{y} \bigr]   \\
       && +\; \sprod{x_J(t)}{y} - \sprod{x_I(t)}{y} \\
         &\leq& \left( 1 - \upsi \right) \left( M_{n-1}^y - m_{n-1}^y \right) - \Delta_{IJ}^y(t),
\]
where we used \eqref{lem:aux7:alt2} and the definition \eqref{def:Delta} of $\Delta_{IJ}^y(t)$ in the last line.
Moreover, as in \eqref{MmKn}, we have $M_{n-1}^y-m_{n-1}^y\leq \K_{n-1}$ and, by definition, $\upsi\leq 1$, so that
\[
    \tot{}{t} \Delta_{IJ}^y(t) \leq (1-\upsi) \K_{n-1} - \Delta_{IJ}^y(t).
\]
An application of the Gr\"{o}nwall's inequality on the interval $\I_n=[(n-1)\tau,n\tau]$ gives
\[
   \Delta_{IJ}^y(n\tau) &\leq& e^{-\tau} \Delta_{IJ}^y((n-1)\tau) + (1-\upsi) \left( 1 - e^{-\tau} \right) \K_{n-1}  \\
    &\leq& \left( 1 - \left(1- e^{-\tau} \right) \upsi \right) \K_{n-1},
\]
where we used the inequality $\Delta_{IJ}^y((n-1)\tau) \leq \K_{n-1}$ which holds by Cauchy-Schwarz inequality,
recalling that $y$ is a unit vector.
With the definition of $\alpha$ and the relation $\K_{n-1} \leq \K_{n-2}$ provided by Lemma \ref{lem:aux2}, we arrive at
\[
   \Delta_{IJ}^y(n\tau) \leq \left( 1 - \alpha \right) \K_{n-2},
\]
and since $d_x(n\tau) = \Delta_{IJ}^y(n\tau)$, claim \eqref{lem:aux7:eq1} is proved.

Finally, we combine estimate \eqref{lem:aux6:claim} of Lemma \ref{lem:aux6} and \eqref{lem:aux7:eq1}
to see that
\[
   \K_{n+1} &\leq& e^{-\tau} d_x(n\tau) + \left( 1- e^{-\tau} \right) \K_n \\
      &\leq& e^{-\tau} \left( 1 - \alpha \right) \K_{n-2} + \left( 1- e^{-\tau} \right) \K_n  \\
      &\leq& \left( 1 - e^{-\tau} \alpha \right) \K_{n-2},
\]
where we again used the relation $\K_n \leq \K_{n-2}$, provided by Lemma \ref{lem:aux2}, in the last inequality.
\end{proof}

Finally, %recalling that with \eqref{alpha} we have 
since we have $ \left(1 - e^{-\tau} \alpha \right) \in (0, 1)$,
Lemma \ref{lem:aux7} together with \eqref{Kn} implies
\[
   \lim_{n\to \infty} \K_n = 0,
\]
exponentially fast. This concludes the proof of Theorem \ref{thm:main}.

\bibliographystyle{amsplain}

\end{document}